\newtheorem{theo}{Theorem}
\newtheorem{prop}{Proposition}[section]
\newtheorem{lem}{Lemma}[section]
\newtheorem{rem}{Remark}[section]
\theoremstyle{definition}
\newtheorem{defi}{Definition}[section]
\newtheorem{example}{Example}
\numberwithin{equation}{section} 
 \newcommand{\Om}{\Omega}
\newcommand{\C}{\mathbb C} 
\newcommand{\R}{\mathbb R}
\newcommand{\be}{\begin{equation}}
\newcommand{\ov}{\overline}
\newcommand{\ee}{\end{equation}}
\newcommand{\beq}{\begin{eqnarray*}}
\newcommand{\eeq}{\end{eqnarray*}}
\begin{document}
  
\begin{abstract}
We generalize Lempert's and Poletsky's works \cite{lem, pol} on the description of extremal discs for the Kobayashi metric
to a higher order setting with $k$-stationarity condition introduced in \cite{be-de}.

\end{abstract} 

\title[Extremal and stationary discs for the Kobayashi $k$-metric]
{Extremal and stationary discs for the Kobayashi $k$-metric} 

\author{Florian Bertrand, Giuseppe Della Sala, Jae-Cheon Joo}

\subjclass[2010]{32F45, 32Q45}
\keywords{}
\maketitle 
\section*{Introduction}
The Kobayashi pseudodistance was introduced by S. Kobayashi \cite{ko1, ko2} for the purpose of finding a 
generalization of the Poincar\'e distance of the unit disc in more general complex spaces. It turned out later that the 
Kobayashi pseudodistance is an inner pseudodistance and H. L. Royden \cite{ro} defined an infinitesimal pseudometric, called the 
Kobayashi pseudometric, whose integrated pseudodistance coincides with the Kobayashi pseudodistance. Due to 
their invariance by biholomorphisms, the Kobayashi pseudodistance and metric are particularly adapted  to study properties of 
holomorphic mappings and structures of complex spaces (cf. \cite{ko3,ja-pf}). 
In his celebrated paper, L. Lempert 
\cite{lem} described extremal discs for the Kobayashi metric in smooth bounded strongly convex domains of $\C^n$  
as stationary discs and used it to define a canonical representation of such domains onto the unit 
ball.    
Lempert's work has given rise to  studies of the relation between extremality and stationarity conditions in more general situations (see for instance \cite{pol, hu, pa, tu, co-ga-su}). In particular, E. Poletsky \cite{pol} proved that the stationarity condition is the Euler-Lagrange equation corresponding to the Kobayashi extremal problem on bounded domains of $\C^n$ defined by $C^2$ plurisubharmonic functions.

In \cite{be-de}, the first two authors introduced a higher order notion of stationarity condition in order to study finite jet 
determination of CR automorphisms of Levi degenerate hypersurfaces. Independently, the study on
 higher order Kobayashi metrics has been developed by several authors \cite{ven, yu1}. 
In view of the works of Lempert and Poletsky, it seems natural to study the relation between generalized stationary discs and extremal discs for higher order Kobayashi metrics. In this paper, we prove that the stationarity condition introduced in \cite{be-de} is the Euler-Lagrange equation corresponding to the higher order Kobayashi extremal problem on bounded domains of $\C^n$ defined by $C^2$ plurisubharmonic functions (Theorem  \ref{theoextr}). 
Moreover, we show that on smooth strictly convex domains of $\C^n$, generalized stationary discs are 
locally extremal for the higher order Kobayashi metric (Theorem  \ref{theokstat}). 
Note that the present paper is focused on the higher order Kobayashi metric introduced by S. Venturini \cite{ven}. A study of the metric introduced by  J. Yu \cite{yu1}, by means of extremal discs, was carried out by M. Jarnicki and P. Pflug in \cite{ja-pf}.

\vskip 0,1cm
\noindent  {\it Acknowledgments.}  Research of the first two authors is partly supported by a fellowship 
at the  Center for Advanced Mathematical Sciences (CAMS) at the American University of Beirut. Parts of the paper were 
written while the third author visited the Center for Advanced Mathematical Sciences; he thanks this institution for its support and 
hospitality.

\section{Preliminaries}

For $r>0$, we set $\Delta_r=\{\zeta\in \C \ | \ |\zeta| <r \}$ and we denote by $\Delta=\{\zeta\in \C \ | \  |\zeta| <1 \}$ the  unit disc in $\C$.

\subsection{Jet bundles}

Let $M$ be a complex manifold of dimension $n$. Locally, we identify $M$ with an open domain in $\C^n$ by taking local coordinates $(z^1,...,z^n)$. Let $U_1$ and $U_2$ be two open neighborhoods of the origin in $\C$. For two holomorphic mappings $f:U_1\rightarrow M$ and $g: U_2 \rightarrow M$ and for a positive integer $k$, we denote $f\sim_k g$ if 
$$f^{(\ell)}(0) = g^{(\ell)}(0)$$ 
for all $\ell=0,...,k$. Obvioulsy, $\sim_k$ is an equivalence relation, and the equivalence class of $f$ will be denoted by $J^k_p(f)$, where $p=f(0)\in M$ and is called a {\em $k$-jet at $p$}. The space of all $k$-jets at $p$ is denoted by $J_p^k(M)$ and we define by 
$J^k(M)=\bigcup_{p\in M} J_p^k(M)$ the {\em $k$-jet bundle over $M$}.  
Then the local coordinate system on $M$ enables us to identify $J_p^k(M)$ with $(\C^n)^k$ by the mapping $J^k_p(f)\mapsto (f'(0),\ldots,f^{(k)}(0))$. 

Although the $k$-jet bundle is not a vector bundle over $M$ unless $k=1$, we can still define a complex multiplication as follows. For $\lambda \in \C$, we set 
$$\lambda \cdot J_p^k(f):= J^k_p(f_\lambda)$$
where $f_\lambda(\zeta) = f(\lambda\zeta)$. Locally, if $J^k_p(f) = \xi=(\xi_1, \xi_2,\ldots,\xi_k)\in (\C^n)^k$ then 
$$\lambda \cdot J^k_p(f) = (\lambda\xi_1, \lambda^2 \xi_2,\ldots, \lambda^k \xi_k).$$

\subsection{Higher order Kobayashi metrics} Let $M$ be a complex manifold. Following S. Venturini \cite{ven}, we define for a positive integer $k$,
the {\it Kobayashi $k$-pseudometric}  $K_M^k(p, \xi)$ by 
$$K^k_M\left(p,\xi\right):=\inf 
\left\{\frac{1}{\lambda}>0 \ | \  f: \Delta \rightarrow M 
\mbox{ holomorphic}, f\left(0\right)=p, J^k_p(f)=\lambda\cdot\xi\right\}$$
for any $p\in M$ and any $\xi\in J^k_p(M)$.
Note that $K^1_M$ is the classical Kobayashi pseudometric defined on the tangent bundle. From the definition, it is clear that  $K^k_M(p, c v) = |c| K^k_M(p, v)$ for any $c\in \C$. The following properties are standard and their proofs are straightforward.
\begin{prop} 
\begin{itemize}
\item[(\romannumeral1)] Let $M$ and $N$ be two complex manifolds and $\Phi:M\rightarrow N$ a holomorphic mapping. Then for any $p\in M$ and $\xi\in J_p^k(M)$, we have 
$$K_N^k(\Phi(p), \Phi_* (\xi)) \leq K_M^k (p, \xi),$$ 
where $\Phi_*(\xi) = J^k_{\Phi(p)}(\Phi\circ g)$, if $\xi=J^k_p(g)$.

\item[(\romannumeral2)]  Let $M$ be a complex manifold and $p\in M$.
Let $\tilde \xi \in J^{k+1}_p(M)$ and let $\xi=\pi_k(\tilde \xi)\in J^k_p(M)$, where $\pi_k$ is the canonical projection from $J^{k+1}_p(M)$ onto $J^{k}_p(M)$.   
 Then  we have 
$$ K_M^k(p,\xi) \leq K_M^{k+1}(p,\tilde \xi).$$ 
\end{itemize}
\end{prop}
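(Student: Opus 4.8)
The plan is to prove both inequalities directly from the definition of $K_M^k$, in each case by exhibiting, for an arbitrary competitor in the infimum on the right-hand side, a competitor for the left-hand side realizing the same value of $1/\lambda$; the asserted inequality between infima then follows at once.

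For (\romannumeral1), I would start with an arbitrary holomorphic disc $f:\Delta\rightarrow M$ with $f(0)=p$ and $J_p^k(f)=\lambda\cdot\xi$, and consider the composed disc $\Phi\circ f:\Delta\rightarrow N$. It is holomorphic and satisfies $(\Phi\circ f)(0)=\Phi(p)$, so it only remains to check that $J_{\Phi(p)}^k(\Phi\circ f)=\lambda\cdot\Phi_*(\xi)$. Writing $\xi=J_p^k(g)$ and recalling that $\lambda\cdot\xi=J_p^k(g_\lambda)$ with $g_\lambda(\zeta)=g(\lambda\zeta)$, the hypothesis $J_p^k(f)=\lambda\cdot\xi$ means $f\sim_k g_\lambda$, whence $\Phi\circ f\sim_k\Phi\circ g_\lambda$. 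Since $\Phi\circ g_\lambda=(\Phi\circ g)_\lambda$, this gives $J_{\Phi(p)}^k(\Phi\circ f)=J_{\Phi(p)}^k\big((\Phi\circ g)_\lambda\big)=\lambda\cdot J_{\Phi(p)}^k(\Phi\circ g)=\lambda\cdot\Phi_*(\xi)$, as wanted. Thus $\Phi\circ f$ is admissible for the extremal problem defining $K_N^k(\Phi(p),\Phi_*(\xi))$ with the same $\lambda$, and taking the infimum over all such $f$ yields the claim.

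For (\romannumeral2), I would take an arbitrary holomorphic disc $f:\Delta\rightarrow M$ with $f(0)=p$ and $J_p^{k+1}(f)=\lambda\cdot\tilde\xi$, and simply verify that the very same disc is admissible for the $k$-th problem with datum $\xi$. Applying the truncation $\pi_k$ and using that $\pi_k(J_p^{k+1}(f))=J_p^k(f)$ together with the compatibility $\pi_k(\lambda\cdot\tilde\xi)=\lambda\cdot\pi_k(\tilde\xi)=\lambda\cdot\xi$ (immediate from the coordinate description $\lambda\cdot(\xi_1,\ldots,\xi_{k+1})=(\lambda\xi_1,\ldots,\lambda^{k+1}\xi_{k+1})$), we obtain $J_p^k(f)=\lambda\cdot\xi$. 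Hence $f$ competes in the infimum defining $K_M^k(p,\xi)$ with the same $\lambda$, and passing to the infimum over $f$ gives $K_M^k(p,\xi)\leq K_M^{k+1}(p,\tilde\xi)$.

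Both arguments are routine; the only point requiring a moment's care is the compatibility of the $\C$-action on jets with the operations $\Phi_*$ and $\pi_k$, that is, the identities $\Phi_*(\lambda\cdot\xi)=\lambda\cdot\Phi_*(\xi)$ and $\pi_k(\lambda\cdot\tilde\xi)=\lambda\cdot\pi_k(\tilde\xi)$. I would handle these by working with the intrinsic definitions (composition of discs, respectively truncation of jets) rather than with coordinate formulas, so that the chain rule is never invoked explicitly; this also makes transparent that the pushforward $\Phi_*$ is well defined on jets, a fact tacitly used in the statement.
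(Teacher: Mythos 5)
Your proof is correct, and it is exactly the routine argument the paper has in mind: the paper offers no proof at all, stating only that ``the proofs are straightforward,'' and your construction of a competitor disc for each side of the inequality ($\Phi\circ f$ in (\romannumeral1), the same disc $f$ after truncation in (\romannumeral2)) is the standard way to fill that in. The one point you flag---that $f\sim_k g_\lambda$ implies $\Phi\circ f\sim_k \Phi\circ g_\lambda$, i.e.\ that $\Phi_*$ is well defined on jets---is indeed the only step with any content, and it is correctly handled.
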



As an example, we consider in the case of the unit disc $M=\Delta$ and $k=2$. We first need the following lemma
\begin{lem}[The second order Schwarz lemma]\label{l;schwarz}
Let $f: \Delta\rightarrow \Delta$ be a holomorphic function that $f(0)=0$. Then we have $|f'(0)|\leq 1$ and 
\be\label{e;extremal}
|f''(0)|\leq 2(1-|f'(0)|^2).
\ee
Moreover, if the equality holds in \eqref{e;extremal}, then 
\begin{equation}\label{eqextr}
f(\zeta) = \zeta \varphi (e^{i\theta} \zeta)
\end{equation} for some $\theta\in \R$, where $\displaystyle \varphi(\zeta) = \frac{\zeta + f'(0)}{1+ \overline{f'(0)} \zeta}$.
\end{lem}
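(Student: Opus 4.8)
The plan is to strip the zero of $f$ at the origin and reduce everything to the classical (first order) Schwarz lemma together with the standard Möbius trick behind Schwarz--Pick. Since $f$ is holomorphic with $f(0)=0$, the function $g(\zeta):=f(\zeta)/\zeta$ extends holomorphically across the origin, with $g(0)=f'(0)=:a$ and, differentiating $f(\zeta)=\zeta g(\zeta)$ twice, $g'(0)=f''(0)/2$. The classical Schwarz lemma gives $|f(\zeta)|\le |\zeta|$, hence $|g|\le 1$ on $\Delta$, and in particular $|f'(0)|=|a|\le 1$, which is the first claim. By the maximum principle either $|a|=1$, forcing $g$ to be the unimodular constant $a$ (so that $f(\zeta)=a\zeta$, $f''(0)=0$, and both sides of \eqref{e;extremal} vanish), or $g$ maps $\Delta$ into $\Delta$.

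In the remaining case $g:\Delta\to\Delta$ with $|a|<1$, I would compose with the automorphism $\psi_a(w)=\dfrac{w-a}{1-\ov a w}$, which sends $a$ to $0$, and set $h:=\psi_a\circ g:\Delta\to\Delta$. Then $h(0)=0$, so the Schwarz lemma yields $|h'(0)|\le 1$. Since $\psi_a'(a)=1/(1-|a|^2)$, the chain rule gives $h'(0)=g'(0)/(1-|a|^2)$, whence
\[|g'(0)|\le 1-|a|^2.\]
Recalling $g'(0)=f''(0)/2$ and $a=f'(0)$, this is precisely the bound $|f''(0)|\le 2(1-|f'(0)|^2)$ of \eqref{e;extremal}.

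For the equality case I would invoke the rigidity part of the Schwarz lemma: equality in \eqref{e;extremal} forces $|h'(0)|=1$, so $h(\zeta)=e^{i\theta}\zeta$ for some $\theta\in\R$. Inverting $\psi_a$---and noting that $\psi_a^{-1}(v)=\dfrac{v+a}{1+\ov a v}=\varphi(v)$ is exactly the Möbius map appearing in the statement---gives $g(\zeta)=\varphi(e^{i\theta}\zeta)$, and therefore $f(\zeta)=\zeta g(\zeta)=\zeta\varphi(e^{i\theta}\zeta)$, which is \eqref{eqextr}. Conversely, one checks directly that every $f$ of this form realizes equality, since $\varphi$ is an automorphism of $\Delta$ and $\varphi(e^{i\theta}\,\cdot\,)$ has derivative $e^{i\theta}(1-|a|^2)$ at the origin.

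I do not expect a serious obstacle here: the whole argument rests on the classical one-variable Schwarz lemma and its rigidity statement. The only points demanding care are the elementary algebraic identities $g'(0)=f''(0)/2$, $\psi_a'(a)=1/(1-|a|^2)$, and $\psi_a^{-1}=\varphi$, together with treating the degenerate case $|f'(0)|=1$ separately so that the formula for $\varphi$ (which then collapses to the constant $a$) remains consistent with $f(\zeta)=e^{i\theta}\zeta$.
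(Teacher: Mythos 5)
Your proof is correct and follows essentially the same route as the paper: factor out the zero to get $g=f/\zeta$, compose with the M\"obius map $\psi_a$ sending $a=f'(0)$ to $0$, and apply the Schwarz lemma (with its rigidity statement) to $h=\psi_a\circ g$. The only difference is that you spell out the intermediate computations ($g'(0)=f''(0)/2$, $\psi_a'(a)=1/(1-|a|^2)$) and add a converse check, which the paper leaves implicit.
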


\begin{proof} The classical Schwarz lemma implies that $|f'(0)|\leq 1$ and if $|f'(0)|=1$, then  $f(\zeta) = e^{i\theta} \zeta$ for some $\theta\in\R$. There is nothing more to prove in this case.  Now we assume $|f'(0)|<1$. 
Let $g(\zeta) = f(\zeta)/\zeta$. Then the Schwarz lemma implies again that $g$ is a holomorphic function from 
$\Delta$ into $\Delta$. Let $a=f'(0)=g(0)$ and let 
$$\psi(\zeta) = \frac{\zeta-a}{1-\bar a \zeta}, \quad h(\zeta) = \psi\circ g(\zeta).$$ 
Since $h(0)=0$, we have 
$$|h'(0)| = \frac{|f''(0)|}{2(1-|f'(0)|^2)}\leq 1,$$which is equivalent to \eqref{e;extremal}. If the equality holds in the above inequality, then $h(\zeta)=e^{i\theta}\zeta$ for some $\theta\in\R$. This yields that $f\zeta) = \zeta \varphi(e^{i\theta}\zeta)$ with
 $\varphi=\psi^{-1}$.
\end{proof}

Applying Lemma \ref{l;schwarz}, we now compute $K_\Delta^2(0,\xi)$  for $\xi=(\xi_1, \xi_2)\in J^2_0 (\Delta)$. Let 
$f:\Delta\rightarrow \Delta$ be a holomorphic function such that $f(0)=0$ and 
$$f'(0) = \lambda \xi_1,\quad f''(0) = \lambda^2 \xi_2$$ 
for some $\lambda \in \C$. By Lemma \ref{l;schwarz} we have
$$|\lambda \xi_1|\leq 1,\quad |\lambda^2 \xi_2| \leq 2-2|\lambda \xi_1|^2.$$ Therefore, 
$$|\lambda| \leq \sqrt{2/(|\xi_2| + 2|\xi_1|^2)}$$
 and there exists a disc $f$ of the form (\ref{eqextr}) which attains the equality.  
 This shows that for $\xi=(\xi_1, \xi_2)\in J^2_0 (\Delta)$, we have 
$$K^2_\Delta(0,\xi)) = \sqrt{|\xi_1|^2 + \frac{|\xi_2|}{2}}.$$

\begin{rem} The following higher order Kobayashi metric was introduced by J. Yu in \cite{yu1} (see also \cite{yu2, ki-hw-ki-le, ni, ja-pf})
$$\chi^k_\Om\left(p,v\right):=\inf 
\left\{\frac{1}{\lambda}>0 \ | \  f: \Delta \rightarrow \Om
\mbox{ holomorphic}, f=p+\zeta^k\Psi, \Psi(0)=\lambda v\right\}$$
for $p$ in a domain $\Om \subset \C^n$ and $v \in \C^n$. Note that for $\xi\in J^k_p(\Om)$ of the form $\xi=(0,\cdots,0,v)$ we have 
$$K^k_\Om\left(p,\xi\right)=\chi^k_\Om\left(p,v\right).$$
\end{rem}

\subsection{$k$-stationary discs} 
Let $\Omega = \{\rho<0\}\subset \mathbb C^n$ be a smooth domain, where $\rho$ is a defining function. We set 
$\displaystyle \partial \rho = \left(\frac{\partial \rho}{\partial z_1},\ldots,\frac{\partial \rho}{\partial z_n}\right)$. Let $k$ be 
a positive integer. Following \cite{be-de}, we define
\begin{defi}
A map $f:\overline{\Delta}\to \mathbb C^n$, holomorphic on $\Delta$ and continuous up to $\overline \Delta$, is a \emph{$k$-stationary disc attached to $b\Omega$} if $f(b\Delta)\subset b\Omega$ and if there exists a continuous function $c:b\Delta\to \mathbb R^+$ such that the function  $\zeta\mapsto \zeta^k c(\zeta)\partial \rho(f(\zeta))\in \mathbb C^n$ defined on $b\Delta$ extends holomorphically to $\Delta$.
\end{defi}
These discs generalize the notion of stationary discs introduced by L. Lempert \cite{lem} and are particularly well adapted to study Levi degenerate hypersurfaces \cite{be-de, be-la-de}. In fact, in the present context, we will also need
\begin{defi}
A bounded holomorphic map $f:\Delta\to \mathbb C^n$ is a \emph{$k$-stationary disc attached to $b\Omega$ in the $L^{\infty}$ sense}  if $f(b\Delta)
\subset b\Omega$ a.e. and if there exists a  $L^{\infty}$ function $c:b\Delta\to \mathbb R^+$ such that the function $ \zeta\mapsto \zeta^k 
c(\zeta)\partial \rho(f(\zeta))\in \mathbb C^n$ defined on $b\Delta$ extends holomorphically to $\Delta$.
\end{defi}

\begin{example}
Consider the case $\Om =\Delta$. Let 
$$f(\zeta) = \prod_{j=1}^k \frac{\zeta-a_j}{1-\ov{a_j} \zeta}$$
for some $a_1,\ldots, a_k\in \Delta$. Let $\rho(\zeta) = |\zeta|^2-1$ be the standard defining function of the unit disc $\Delta$. Then 
$$\partial \rho (f(\zeta)) = \ov{f(\zeta)}$$ and so the function 
$$b\Delta\ni \zeta \mapsto \zeta^k \partial \rho (f(\zeta))  = \zeta^k \ov{f(\zeta)}$$
has winding index $0$ around the origin. Therefore, there exists a function $g$ defined on $b\Delta$ such that $\displaystyle e^{g(\zeta)} = \zeta^k \ov{f(\zeta)}$. Let $h$ be a continuous real-valued function on $b\Delta$ for that $g +h$ extends to a holomorphic function on $\Delta$ and let $c(\zeta) = e^{h(\zeta)}$. Then 
$$\zeta^k c(\zeta)  \partial \rho (f(\zeta)) = e^{(g+h)(\zeta)},$$
which is holomorphic on $\Delta$. Therefore, the disc $f$ is $k$-stationary.
\end{example}

\section{Extremal discs for the Kobayashi $k$-pseudometric are $k$-stationary}
For $v=(v_1,\ldots,v_n),w=(w_1,\ldots,w_n)\in \mathbb C^n$, we set 
$$\langle v,w\rangle = \sum_{j=1}^n v_j w_j.$$
For a domain $\Om\subset \C^n$,  we denote by ${\rm Hol}(\Delta,\Om)$ the space of holomorphic maps from 
$\Delta$ to $\Om$, by ${\rm H}^{\infty}(\Delta,\C^n)={\rm Hol}(\Delta,\C^n) \cap {\rm L}^{\infty}(\Delta,\C^n)$ and 
by ${\rm H}^{1}(\Delta,\C^n)={\rm Hol}(\Delta,\C^n) \cap {\rm L}^{1}(b\Delta,\C^n)$. Let $k$ be 
a positive integer.
\begin{defi} Let $\Omega \subset \C^n$ be a domain. A map 
$f: \Delta \rightarrow \Omega$ is a {\it extremal disc for the Kobayashi $k$-metric for the pair $(p,\xi) \in \Omega \times J^k_p(\Om)$} if  $f(0)=p$, $J^k_p(f)=\lambda \cdot \xi$ 
with $\lambda>0$ and if
$g: \Delta \rightarrow \Omega$ is  holomorphic and such that $g(0)=p$, $J^k_p(g)=\mu \cdot \xi$ with $\mu>0$, then $\mu\leq \lambda$.

\end{defi}

Note that in case $\Om \subset \C^n$ is a bounded domain,  Montel's theorem ensures the existence of extremal discs for any pair $(p,\xi) \in \Omega \times J^k_p(\Om)$.
\begin{theo}\label{theoextr}
Let $\Om=\{\rho<0\} \subset \C^n$ be a bounded domain defined by a $C^2$ plurisubharmonic function $\rho$. Let $p\in \Omega$, $\xi \in J^k_p(\Om)\setminus \{0\}$. Then any extremal disc $f$ for the Kobayashi $k$-metric is almost proper, that is $f(b\Delta) \subset b\Om$ a.e., and  $k$-stationary in the $L^\infty$ sense.  
\end{theo}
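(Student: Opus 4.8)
The plan is to follow the variational strategy of Poletsky \cite{pol}, recasting the $k$-extremal problem as a constrained maximization and reading off the stationarity relation as its Euler--Lagrange equation. Among all $f\in \mathrm{H}^\infty(\Delta,\Om)$ with $f(0)=p$ and $J^k_p(f)=\la\cdot\xi$ we maximize $\la>0$; by Montel's theorem such an extremal $f$ exists, and since $\rho$ is plurisubharmonic the function $\zeta\mapsto\rho(f(\zeta))$ is subharmonic and negative on $\Delta$, so $f$ admits radial boundary values $f^*$ a.e.\ with $\rho(f^*)\le 0$. The decisive first step is to identify the admissible first-order perturbations $h=\tfrac{d}{dt}f_t\big|_{t=0}$. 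Writing $f=\sum_m a_m\zeta^m$ one checks that the $\ell$-th derivative of $\zeta f'(\zeta)$ at $0$ equals $\ell\,f^{(\ell)}(0)$, so a one-parameter family of competitors with $k$-jet $\mu(t)\cdot\xi$ and $\mu(0)=\la$ has initial velocity of the form
\[
h=\tfrac{\beta}{\la}\,\zeta f'(\zeta)+\zeta^{k+1}\psi(\zeta),\qquad \beta=\mu'(0)\in\R,\ \ \psi\in \mathrm{H}^\infty(\Delta,\C^n);
\]
the first term generates the change of $\la$ simultaneously in every order $\ell\le k$, while the free term $\zeta^{k+1}\psi$ encodes the harmless freedom in the Taylor coefficients of order $>k$.

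I would next establish almost properness by contradiction. If $\rho(f^*)<0$ on a set $E\subset b\Delta$ of positive measure, then on $E$ the constraint $f_t(\Delta)\subset\Om$ is inactive for small $t$, so $h$ may be prescribed freely there; solving the remaining one-sided constraint $\mathrm{Re}\,\langle\partial\rho(f^*),h^*\rangle\le0$ only on the proper subset $b\Delta\setminus E$ (by a Riemann--Hilbert/$\mathrm{H}^\infty$ construction) produces an admissible $h$ with $\beta>0$, i.e.\ a competitor with $\mu>\la$, contradicting extremality. Hence $\rho(f^*)=0$ a.e.

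With almost properness in hand the linearized feasibility of a direction $h$ reads $\mathrm{Re}\,\langle\partial\rho(f^*(\zeta)),h^*(\zeta)\rangle\le0$ a.e.\ on $b\Delta$, and extremality says that no admissible $h$ has $\beta>0$; equivalently the linear functional $h\mapsto\beta$ is $\le0$ on the convex cone cut out by these inequalities. I would then invoke convex duality (a Hahn--Banach/Kuhn--Tucker separation) to produce a nonnegative multiplier $c\in \mathrm{L}^\infty(b\Delta,\R^+)$ for which stationarity of the Lagrangian in the free directions gives
\[
\int_{b\Delta}\mathrm{Re}\,\langle\partial\rho(f^*(\zeta)),\zeta^{k+1}\psi(\zeta)\rangle\,c(\zeta)\,|d\zeta|=0\qquad\text{for all }\psi\in \mathrm{H}^\infty(\Delta,\C^n).
\]
Replacing $\psi$ by $i\psi$ removes the real part, and testing against $\psi(\zeta)=\zeta^m e_j$ shows that every Fourier coefficient of $c\,\partial\rho(f^*)$ of index $\le-(k+1)$ vanishes; multiplying by $\zeta^k$ then leaves a function $\zeta\mapsto\zeta^k c(\zeta)\partial\rho(f^*(\zeta))$ supported on nonnegative frequencies, hence one that extends holomorphically to $\Delta$. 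This is exactly the $k$-stationarity of $f$ in the $\mathrm{L}^\infty$ sense.

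The main obstacle is the rigorous production of the multiplier $c$: carrying out the convex duality in this infinite-dimensional $\mathrm{L}^\infty$ setting requires a constraint qualification and the absence of a duality gap, together with the positivity $c\ge0$ and enough integrability to justify testing against all $\psi$. This is precisely where plurisubharmonicity is indispensable --- it makes $\rho\circ f$ subharmonic, so that the first-order analysis is not destroyed by the second-order Hessian terms and the inequality $\rho\le0$ is correctly oriented, forcing the Kuhn--Tucker multiplier to be a genuine nonnegative function rather than a signed measure. A secondary difficulty is the higher-order bookkeeping: one must verify that the family $\tfrac{\beta}{\la}\zeta f'+\zeta^{k+1}\psi$ is genuinely tangent to the set of discs whose $k$-jet is proportional to $\xi$ and can be integrated to honest competitors, which calls for an implicit-function argument to correct the Taylor coefficients of order $>k$.
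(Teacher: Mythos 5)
Your overall strategy is the right one --- this is exactly the Poletsky variational approach the paper follows --- and your computation of the tangent directions is correct and even rather elegant: since the $\ell$-th derivative of $\zeta f'(\zeta)$ at $0$ is $\ell f^{(\ell)}(0)$, the direction $\tfrac{\beta}{\la}\zeta f'+\zeta^{k+1}\psi$ does capture in one stroke the nonlinear coupling $f^{(\ell)}(0)=\mu^{\ell}\xi_\ell$ across all orders $\ell\le k$. The final step is also sound: once one has a nonnegative $L^\infty$ multiplier $c$ with the stated orthogonality to the free directions $\zeta^{k+1}\psi$, testing against $\zeta^m e_j$ and $i\zeta^m e_j$ kills the Fourier coefficients of $c\,\partial\rho(f^*)$ of index $\le -(k+1)$, and multiplying by $\zeta^k$ gives the holomorphic extension, i.e.\ $k$-stationarity in the $L^\infty$ sense. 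This matches the paper's concluding observation that all the $\omega$'s have poles of order at most $k$ at the origin.

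The genuine gap is the one you yourself flag: the existence of the nonnegative multiplier $c\in L^\infty(b\Delta,\R^+)$ (together with the $H^\infty$ correction term $g$ with $g(0)=0$) and the almost properness of $f$ are precisely the hard analytic content of the theorem, and ``invoke convex duality / Kuhn--Tucker'' is not an argument in this infinite-dimensional, one-sided, $L^\infty$ setting --- there is no off-the-shelf separation theorem that delivers an $L^\infty$ density rather than a finitely additive measure, and your sketch of almost properness (prescribing $h$ freely on the set $E$ and solving a Riemann--Hilbert problem on its complement) is not carried out. The paper does not prove this either; it reduces to Poletsky's Theorem 3, which supplies exactly the conclusion $\sum_j\la_j\omega_j+g=c\,\partial\rho(f_0)$ a.e.\ on $b\Delta$ together with almost properness. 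The price of that reduction, and the actual work in the paper, is (i) encoding the jet constraints as functionals of the specific form $\mathrm{Re}\int_{b\Delta}\langle f,\omega\rangle\,d\theta$ with $\omega$ meromorphic at $0$ --- which forces the introduction of the nonlinear functionals $\Phi^1_{m'}(f)=\cdots-\bigl(\Phi^1_{j_0}(f)\bigr)^{m'/j_0}$ to express the condition $\mu_{m'}=\la^{m'}$, since your direction $\zeta f'$ depends on the unknown $f$ and cannot serve as a fixed constraint kernel --- and (ii) verifying the constraint qualification, namely that the differentials are linearly independent over $b\Delta$ (no nontrivial real combination of the $\omega$'s equals an $H^\infty$ function vanishing at $0$), which uses that $\ov\xi_m,\eta_m^1,\dots,\eta_m^{n-1}$ form a basis of $\C^n$. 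To complete your proof you would need either to carry out the duality argument in full (essentially reproving Poletsky's theorem) or to perform this reduction, including the linear-independence check, which is absent from your proposal.
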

The proof follows the variational approach developed by E. Poletsky in  \cite{pol}. We point out that in order to describe complex geodesics in 
complex ellipsoids,  A. Edigarian  \cite{ed} generalized Poletsky's theory. It follows from M. Jarnicki and P. Pflug \cite{ja-pf}  (see in particular Remark 11.4.4), that using Edigarian-Poletsky theory, extremal discs for the Yu's $k^{\rm th}$ order Kobayashi metric are $k$-stationary in the  $L^{\infty}$ 
sense.

\subsection{Poletsky extremal problem}
Let $\Om \subset \C^n$ be a bounded domain defined by a $C^2$ plurisubharmonic function. Consider a real-valued functional $\Phi$ defined on ${\rm H}^{\infty}(\Delta,\C^n)$ satisfying 

(A)  $\Phi$ is differentiable,

(B) for any $f \in {\rm H}^{\infty}(\Delta,\C^n)$, the differential $d_f \Phi$ of $\Phi$ at $f$ is of the form 
$d_f \Phi(h)={\rm Re} \int_{b\Delta} \langle h, \omega \rangle d\theta$ where $\omega$  is holomorphic on $\C\setminus \overline{\Delta_r}$ for some $r<1$.  

Consider $N+1$ functionals $\Phi_0,\ldots,\Phi_N$ satisfying conditions (A) and (B) and real numbers $a_1, \ldots, a_N$. The Poletsky extremal problem (P) consists in finding $f_0 \in{\rm Hol}(\Delta,\Om)$ which maximizes $\Phi_{0}$ under the constraints $\Phi_j(f)=a_j$, $1\leq j \leq N$, and $f \in{\rm Hol}(\Delta,\Om)$.

We also recall that, according to the definition given in \cite{pol} (see also \cite{ed}), real-valued linear functionals $F_1,\ldots,F_N$ defined on ${\rm H}^{1}(\Delta,\C^n)$ and of the form $F_j(f)={\rm Re}\int_{b\Delta} \langle f,\omega_j\rangle d\theta$ are called \emph{linearly independent over $b\Delta$} if $\sum_{j=1}^N \lambda_j \omega_j=g$ on $b\Delta$ for certain $\lambda_j\in \mathbb R$ and $g\in {\rm H}^{\infty}(\Delta,\C^n)$ with $g(0)=0$ only when $\lambda_j=0$ for all $1\leq j \leq N$ and  $g\equiv 0$.
\begin{rem}
In \cite{pol}, linear independence is actually defined as a stronger property; the condition $g(0)=0$ is omitted. However, an inspection of the proof of Theorem 3 in \cite{pol} shows that the definition above is enough for its application. Note that in \cite{ed,ja-pf}, linear independence is defined as well with the  condition $g(0)=0$. 
\end{rem}

E. Poletsky proved the following theorem 

\begin{theo}[Theorem 3 p. 330 \cite{pol}] 
Suppose $f_0$ is a solution of the extremal problem (P) and suppose that the differentials $d_{f_0}\Phi_1, \ldots,
d_{f_0}\Phi_N$ are linearly independent over $b\Delta$. Then: 
\begin{enumerate}[i.]
\item $f_0$ is almost proper.   
\item There exist real numbers $\lambda_1,\ldots,\lambda_N$, a $L^{\infty}$ function $c:b\Delta\to \mathbb R^+$ and $g\in {\rm H}^{\infty}(\Delta,\C^n)$ with $g(0)=0$ such that 
$$\sum_{j=1}^N \lambda_j\omega_j+ g = c \partial \rho(f_0)$$ 
a.e. on $\ b\Delta$, where $d_{f_0}\Phi_j = {\rm Re}\int \langle \cdot,\omega_j\rangle d\theta$ for some $\omega_j$ which is holomorphic on $\C\setminus \overline{\Delta_r}$ for some $r<1$.  

\end{enumerate}
\end{theo}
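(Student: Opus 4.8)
The plan is to read the extremal problem (P) as a constrained maximization in the Banach space ${\rm H}^\infty(\Delta,\C^n)$ and to obtain both conclusions as its Karush--Kuhn--Tucker conditions: the finitely many equality constraints $\Phi_j=a_j$ will contribute the real multipliers $\lambda_j$, the obstacle $f\in{\rm Hol}(\Delta,\Om)$ will contribute the nonnegative boundary reaction $c\,\partial\rho(f_0)$, and the holomorphy of the admissible competitors will account for the remainder $g$. First I would linearize the feasible set at $f_0$. For $h\in{\rm H}^\infty(\Delta,\C^n)$ and small $t>0$, the curve $f_0+th$ stays in $\Om$ once, at almost every point of the contact set $\{\rho(f_0)=0\}$, one has ${\rm Re}\,\langle\partial\rho(f_0),h\rangle\le 0$; plurisubharmonicity of $\rho$ is what promotes this first-order tangency to genuine feasibility, since $\zeta\mapsto\rho((f_0+th)(\zeta))$ is then subharmonic and, by the maximum principle, controlled by its boundary values. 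Preserving the equality constraints to first order means $d_{f_0}\Phi_j(h)={\rm Re}\int_{b\Delta}\langle h,\omega_j\rangle\,d\theta=0$ for $1\le j\le N$. Since $f_0$ maximizes $\Phi_0$, every such admissible direction must satisfy $d_{f_0}\Phi_0(h)={\rm Re}\int_{b\Delta}\langle h,\omega_0\rangle\,d\theta\le 0$.

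Second, I would dispose of the equality constraints. The hypothesis that $d_{f_0}\Phi_1,\dots,d_{f_0}\Phi_N$ are linearly independent over $b\Delta$ is exactly the constraint qualification needed: it forces the finite-dimensional map $h\mapsto(d_{f_0}\Phi_1(h),\dots,d_{f_0}\Phi_N(h))$ to be onto, so the standard finite-codimensional Lagrange multiplier theorem yields $\lambda_1,\dots,\lambda_N\in\R$ for which the functional $h\mapsto{\rm Re}\int_{b\Delta}\langle h,\omega\rangle\,d\theta$, with $\omega:=\omega_0-\sum_{j=1}^N\lambda_j\omega_j$, is nonpositive on the whole obstacle cone $C=\{h\in{\rm H}^\infty(\Delta,\C^n):{\rm Re}\,\langle\partial\rho(f_0),h\rangle\le 0 \text{ a.e. on } b\Delta\}$. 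A Hahn--Banach separation against $C$, together with the description of its polar cone as the cone of pairings against positive measures carried by $\partial\rho(f_0)$, represents this functional on holomorphic test directions as ${\rm Re}\int_{b\Delta} c\,\langle h,\partial\rho(f_0)\rangle\,d\theta$, where $c\ge 0$ is the density of that measure; the regularity of the data forces absolute continuity, so $c\in L^\infty$. Complementary slackness gives $c=0$ wherever $\rho(f_0)<0$.

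The step I expect to be the main obstacle is reconciling this with the fact that the test directions $h$ must be holomorphic. Were $h$ allowed to range over all of $L^\infty(b\Delta,\C^n)$, separation would give $\omega=c\,\partial\rho(f_0)$ pointwise; but since the bilinear pairing is $\langle v,w\rangle=\sum_j v_jw_j$ and $h$ carries only nonnegative Fourier frequencies, the functional $h\mapsto{\rm Re}\int_{b\Delta}\langle h,\omega\rangle\,d\theta$ detects exactly the nonpositive-frequency Laurent coefficients of $\omega$ and nothing more: adding to $\omega$ any $g\in{\rm H}^\infty(\Delta,\C^n)$ with $g(0)=0$, which carries only strictly positive frequencies, leaves every such functional unchanged. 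Hence $\omega_0-\sum_{j=1}^N\lambda_j\omega_j$ and $c\,\partial\rho(f_0)$ agree in all nonpositive frequencies, so their difference is bounded on $b\Delta$ with vanishing coefficients in all frequencies $\le 0$; it is therefore the boundary value of a bounded holomorphic function vanishing at the origin. Naming it $-g$ and rearranging gives the asserted Euler--Lagrange identity $\omega_0-\sum_{j=1}^N\lambda_j\omega_j=c\,\partial\rho(f_0)+g$ a.e. on $b\Delta$, with $c\ge 0$ — this holomorphy defect is precisely the source of the term $g$ in the statement.

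Finally I would establish almost properness (i). Suppose, for contradiction, that the slack set $A=\{\zeta\in b\Delta:\rho(f_0(\zeta))<0\}$ had positive measure. There complementary slackness forces $c=0$, while $u=\rho\circ f_0$ is subharmonic, $\le 0$, and has strictly negative boundary values on $A$, so $f_0$ retains genuine room near $A$. Using the constraint qualification once more to cancel the $N$ equality constraints, I would construct an admissible competitor that pushes $f_0$ outward over $A$ and strictly increases $\Phi_0$ — feasibility of this outward push again resting on plurisubharmonicity of $\rho$ through the subharmonicity and maximum-principle argument of the first paragraph — contradicting the maximality of $f_0$. Making this competitor simultaneously holomorphic, feasible for the obstacle on the contact set, and exactly constraint-preserving is the technical heart of part (i), and it is carried out in \cite{pol}.
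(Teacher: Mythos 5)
The first thing to say is that the paper does not prove this statement at all: it is quoted verbatim as Poletsky's Theorem~3 (p.~330 of \cite{pol}) and used as a black box in the proof of Theorem~\ref{theoextr}; the only original content nearby is the remark that the weakened notion of linear independence over $b\Delta$ (allowing $g(0)=0$) still suffices for Poletsky's argument. So there is no in-paper proof to compare yours against, and your outline has to stand on its own.

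As a proof it does not stand, for two concrete reasons beyond the fact that you defer the ``technical heart'' to \cite{pol}. The most serious gap is your very first step: it is not true that $f_0+th$ stays in $\Omega$ for small $t>0$ whenever ${\rm Re}\,\langle \partial\rho(f_0),h\rangle\le 0$ a.e.\ on the contact set. At a boundary point where $\rho(f_0)=0$ and the first-order term vanishes, $\rho(f_0+th)=O(t^2)$ with a second-order term that is typically positive --- plurisubharmonicity makes the Levi form nonnegative, which works \emph{against} feasibility here, not for it --- so the boundary values of $\rho\circ(f_0+th)$ need not be $\le 0$ and the maximum-principle argument never gets started. Converting the linearized cone into genuine admissible variations is precisely the hard analytic content of Poletsky's proof, and your outline replaces it with a false assertion. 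The second gap is the regularity of the obstacle multiplier: Hahn--Banach separation against your cone $C$ produces a positive measure on $b\Delta$, and the claim that ``the regularity of the data forces absolute continuity, so $c\in L^\infty$'' is asserted, not argued; getting a bounded density rather than a singular measure is itself a nontrivial step. The parts of your sketch that do hold up --- linear independence over $b\Delta$ as the constraint qualification, and the Fourier-frequency explanation of why the Euler--Lagrange identity only holds modulo a term $g\in{\rm H}^\infty(\Delta,\C^n)$ with $g(0)=0$ --- correctly identify the shape of Poletsky's argument, but the shape is all that is here; part~(i) in particular is handed back to the reference essentially untouched.
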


\subsection{Proof of Theorem \ref{theoextr}}
Let  $p\in\mathbb C^n$ and $\xi=(\xi_1,\ldots,\xi_k)\in J^k_p(\Om)\setminus\{0\}$.  We write $\xi_j=(\xi_{j1},\dots,\xi_{jn})\in \mathbb C^n$ and for any holomorphic map $f: \Delta \to \mathbb C^n$, we write $f=(f_1,\ldots, f_n)$.  Let $1\leq j_0 \leq k$ be the smallest integer such that $\xi_{j_0}\neq 0$.   

We formulate the extremality condition as Poletsky extremal problem (P), as follows.
For any $j$ such that $ \xi_j\neq 0$ we choose linearly independent vectors 
$\eta_j^1,\ldots,\eta_j^{n-1}$ (with $\eta_j^\ell=(\eta_{j1}^\ell,\ldots,\eta_{jn}^\ell)$) such that $\langle \xi_j, \eta_j^\ell\rangle = 0$ for all 
$1\leq \ell \leq n-1$ .
We define, for $1\leq h \leq n$, $ 1\leq \ell \leq n-1$ , $j_0\leq m \leq k$  and $j_0+1\leq m' \leq k$ such that $\xi_m\neq 0$, $\xi_{m'}\neq 0$, 
continuous real functionals on ${\rm H}^{\infty}(\Delta,\C^n)$:
\[
\left\{
\begin{array}{lll}
\displaystyle \Phi_{0,h}^1(f)=\displaystyle \frac{1}{2\pi}{\rm Re}\int_{b\Delta}f_h(e^{i\theta})d\theta, \ \  \ \   \ \  \ \ \displaystyle \Phi_{0,h}^2(f)=\displaystyle \frac{1}{2\pi}{\rm Re}\int_{b\Delta}-if_h(e^{i\theta})d\theta, \\ 
\\
\displaystyle \Phi_{m,\ell}^1(f) =  \displaystyle \frac{1}{2\pi}{\rm Re}\int_{b\Delta} \frac{\langle f(e^{i\theta}), \eta_{m}^\ell\rangle}{\zeta^m}d\theta,
\ \  \ \   \ \  \ \
\displaystyle \Phi_{m,\ell}^2(f) = \displaystyle \frac{1}{2\pi}{\rm Re}\int_{b\Delta} -i\frac{\langle f(e^{i\theta}), \eta_{m}^\ell\rangle
}{\zeta^m}d\theta,\\
\\  
\displaystyle \Phi_{j_0}^1(f) = \displaystyle \frac{(j_0)!}{2\pi\|\xi_{j_0}\|^2}{\rm Re}\int_{b\Delta} \frac{\langle f(e^{i\theta}), \overline \xi_{j_0}\rangle}{\zeta^{j_0}}d\theta,\\
\\
\Phi_{m'}^1(f) = \displaystyle \frac{m'!}{2\pi\|\xi_{m'}\|^2}{\rm Re}\int_{b\Delta} \frac{\langle f(e^{i\theta}), \overline \xi_{m'}\rangle}{\zeta^{m'}}d\theta - \left(\Phi_{j_0}^1(f)\right)^{\frac{m'}{j_0}},\\
\\ 
\displaystyle  \Phi_m^2(f) = \displaystyle \frac{1}{2\pi}{\rm Re}\int_{b\Delta} -i\frac{\langle f(e^{i\theta}), \overline \xi_{m}\rangle}{\zeta^m}d\theta.
\end{array}
\right.
\]
For all $j_0+1\leq m \leq k$ such that $\xi_m=0$ and all $j_0+1\leq m'\leq k$ such that  $\xi_{m'}=0$ we define instead
\[
\left\{
\begin{array}{lll}
\Phi_{m,\ell}^1(f) =\displaystyle  \frac{1}{2\pi}{\rm Re}\int_{b\Delta} \frac{f_\ell(e^{i\theta})}{\zeta^m}d\theta,
\ \  \ \   \ \  \ \

\Phi_{m,\ell}^2(f) = \displaystyle \frac{1}{2\pi}{\rm Re}\int_{b\Delta} -i\frac{f_\ell(e^{i\theta})}{\zeta^m}d\theta,\\
\\
\Phi_m^2(f) =\displaystyle  \frac{1}{2\pi}{\rm Re}\int_{b\Delta} -i\frac{f_n(e^{i\theta})}{\zeta^m}d\theta,
\ \  \ \   \ \  \ \

\Phi_{m'}^1(f) = \displaystyle \frac{1}{2\pi}{\rm Re}\int_{b\Delta} \frac{f_n(e^{i\theta})}{\zeta^{m'}}d\theta.
\end{array}
\right.
\]
\begin{lem}\label{varprob} A holomorphic disc $f: \Delta \to \Om$ is extremal for the Kobayashi $k$-metric for the pair 
$(p,\xi)=(p,0,\cdots,0,\xi_{j_0},\cdots,\xi_k) \in \Omega \times J^k_p(\Om)$ if and only if it maximizes $\Phi_{j_0}^1$ under the constraints 

\[
\left\{
\begin{array}{lll}
\Phi^1_{0,h}(f) = {\rm Re}\,p_h\\
\\
\Phi^2_{0,h}(f) = {\rm Im}\,p_h\\
\\
\Phi^1_{m,\ell}(f) = \Phi^2_{m,\ell}(f) = \Phi^2_m(f) = \Phi^1_{m'}(f)=0\\
\\
f\in {\rm Hol}(\Delta,\Omega).\\
\end{array}
\right.
\]
%
\end{lem}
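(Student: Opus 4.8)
The plan is to set up an explicit dictionary between the constraints and objective of problem (P) and the three analytic conditions defining $K^k$-extremality at $(p,\xi)$: that $f(0)=p$, that $f^{(m)}(0)=\lambda^m\xi_m$ for $1\le m\le k$, and that the dilation $\lambda>0$ be as large as possible. The single computation underlying everything is that for $f\in{\rm H}^{\infty}(\Delta,\C^n)$, whose boundary Fourier expansion carries no negative modes, one has $\frac{1}{2\pi}\int_{b\Delta}f_h(e^{i\theta})\,\zeta^{-m}\,d\theta=\frac{f_h^{(m)}(0)}{m!}$ and, for any constant $w\in\C^n$, $\frac{1}{2\pi}\int_{b\Delta}\langle f(e^{i\theta}),w\rangle\,\zeta^{-m}\,d\theta=\frac{1}{m!}\langle f^{(m)}(0),w\rangle$ (here $\zeta=e^{i\theta}$). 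Thus every functional above reads off the real or imaginary part of one entry of the coefficient $f^{(m)}(0)/m!$, and the proof is a matter of bookkeeping.

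First I would dispose of the low-order data: the pair $\Phi_{0,h}^1,\Phi_{0,h}^2$ returns $({\rm Re}\,f_h(0),{\rm Im}\,f_h(0))$, so the first two constraints are exactly $f(0)=p$. Next, fix $m$ with $j_0\le m\le k$ and $\xi_m\ne 0$. Since the $\eta_m^1,\dots,\eta_m^{n-1}$ are $n-1$ linearly independent vectors in the $\langle\cdot,\cdot\rangle$-orthogonal complement of $\xi_m$, which itself has dimension $n-1$, the vanishing $\Phi_{m,\ell}^1(f)=\Phi_{m,\ell}^2(f)=0$ for $1\le\ell\le n-1$ says precisely that $\langle f^{(m)}(0),\eta_m^\ell\rangle=0$ for every $\ell$, i.e. that $f^{(m)}(0)\in\C\,\xi_m$; write $f^{(m)}(0)=\alpha_m\xi_m$. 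The functional $\Phi_m^2$ then returns ${\rm Im}\bigl(\alpha_m\|\xi_m\|^2/m!\bigr)$, so $\Phi_m^2(f)=0$ forces $\alpha_m\in\R$. For the orders $m$ with $\xi_m=0$ the analogous functionals force every component of $f^{(m)}(0)$ to vanish, giving $f^{(m)}(0)=0=\alpha_m\xi_m$; the orders $m<j_0$ are imposed to vanish in the same way.

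It remains to couple the scalars $\alpha_m$ to a single dilation. By the coefficient formula $\Phi_{j_0}^1(f)=\alpha_{j_0}$, which is the quantity to be maximized, while for $m'>j_0$ with $\xi_{m'}\ne 0$ one gets $\Phi_{m'}^1(f)=\alpha_{m'}-\alpha_{j_0}^{m'/j_0}$, so the constraint $\Phi_{m'}^1(f)=0$ reads $\alpha_{m'}=\alpha_{j_0}^{m'/j_0}$. Setting $\lambda=\alpha_{j_0}^{1/j_0}$, the entire constraint list becomes exactly $f(0)=p$ together with $f^{(m)}(0)=\lambda^m\xi_m$ for $1\le m\le k$, that is $J^k_p(f)=\lambda\cdot\xi$, while the objective becomes $\Phi_{j_0}^1(f)=\lambda^{j_0}$. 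Since $t\mapsto t^{j_0}$ is increasing on $(0,\infty)$, a feasible $f$ maximizes $\Phi_{j_0}^1$ if and only if it maximizes $\lambda$ among the competitors of the Kobayashi extremal problem; reading this equivalence in both directions—a feasible $g$ of value $\mu^{j_0}$ is a Kobayashi competitor with parameter $\mu$, and conversely—yields the asserted \emph{if and only if}.

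The point that needs care is the nonlinear constraint carrying the real power $(\cdot)^{m'/j_0}$: it is only meaningful, and $\lambda=\alpha_{j_0}^{1/j_0}$ is only a well-defined positive number, in the region $\Phi_{j_0}^1(f)=\alpha_{j_0}>0$. This positivity is supplied by the problem itself—by Montel's theorem competitors with $\lambda>0$ exist, so the maximal value of $\Phi_{j_0}^1$ is strictly positive—and I would record explicitly that the whole discussion takes place where $\Phi_{j_0}^1>0$, on which all the functionals, in particular $\Phi_{m'}^1$, are differentiable; this observation is what will let problem (P) be fed into Poletsky's theorem at the next step. The only genuinely non-formal ingredient is the linear-algebra identity $(\xi_m^{\perp})^{\perp}=\C\,\xi_m$ for the symmetric form $\langle\cdot,\cdot\rangle$, which is precisely what converts the $2(n-1)$ scalar conditions at order $m$ into the single statement $f^{(m)}(0)\in\C\,\xi_m$.
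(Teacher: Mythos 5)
Your proof is correct and follows essentially the same route as the paper: reading off the Fourier coefficients of $f$ on $b\Delta$ to translate each functional into a condition on $f^{(m)}(0)$, using the $\eta_m^\ell$ to force $f^{(m)}(0)\in\C\,\xi_m$, using $\Phi_m^2$ to make the scalar real, and the reparametrization $\alpha_{j_0}=\lambda^{j_0}$ to couple the orders. Your explicit remark about where $\Phi_{j_0}^1>0$ (so that the fractional powers are well defined) is a point the paper's proof passes over silently, but it does not change the argument.
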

\begin{proof} Expanding each component of $f$ in its Fourier series, we obtain that the conditions $\Phi^1_{0,h}(f) = {\rm Re}\,p_h$, $\Phi^2_{0,h}(f) = {\rm Im}\,p_h$ are equivalent to $f(0)=p$. Moreover, the choice of the vectors $\eta_j^\ell$ implies that, for certain $\mu_m\in \mathbb C$, $f^{(m)}(0)=\mu_m \xi_m$ for all $j_0\leq m \leq k$ whenever $\Phi^1_{m,\ell}(f) = \Phi^2_{m,\ell}(f) = 0$ for all $j_0\leq m\leq k$, $1\leq \ell \leq n-1$. The conditions $\Phi^2_m(f)=0$, ensure further that $\mu_m\in \mathbb R$ for all $j_0\leq m\leq k$. Setting $\mu_{j_0}=\lambda^{j_0}$, the equations $\Phi^1_{m'}(f)=0$ amount to $\mu_{m'} = \lambda^{m'}$ whenever $\xi_{m'}\neq 0$, and to $f^{(m')}(0)=0$ if $\xi_{m'}=0$. Since then $\Phi^1_{j_0}(f)= \lambda^{j_0}$, a disc $f$ is extremal if it maximizes $\Phi^1_{j_0}$ under the above constraints  and $f\in {\rm Hol}(\Delta,\Omega)$. 
\end{proof}

\begin{lem}\label{satisfies}
The problem in Lemma \ref{varprob} satisfies the conditions of the Poletsky extremal problem $(P)$.
\end{lem}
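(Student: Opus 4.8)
The plan is to verify conditions (A) and (B) directly for the objective functional $\Phi_{j_0}^1$ and for each family of constraint functionals appearing in the two display blocks, one family at a time. The guiding observation is that every functional except $\Phi_{m'}^1$ (in the case $\xi_{m'}\neq 0$) is real-linear in $f$: for such a functional differentiability is automatic, so (A) holds, and (B) is verified by simply reading off the weight $\omega$ from the integrand.

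Concretely, each linear functional in both blocks has the shape $f\mapsto \frac{1}{2\pi}{\rm Re}\int_{b\Delta}\langle f(e^{i\theta}),w\rangle\,\zeta^{-m}\,d\theta$ for a fixed integer $m\geq 0$ and a fixed vector $w\in\C^n$ (one of $\overline\xi_m$, $\eta_m^\ell$, or a standard basis vector, possibly multiplied by $-i$), using that $\langle f,w\rangle\,\zeta^{-m}=\langle f,\zeta^{-m}w\rangle$. Being linear, such a functional is its own differential, $d_f\Phi(h)={\rm Re}\int_{b\Delta}\langle h,\omega\rangle\,d\theta$ with $\omega=\frac{1}{2\pi}\zeta^{-m}w$, independent of $f$. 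Since $\zeta^{-m}$ is holomorphic on $\C\setminus\{0\}$, the weight $\omega$ is holomorphic on $\C\setminus\overline{\Delta_r}$ for any $0<r<1$; for $\Phi_{0,h}^1,\Phi_{0,h}^2$ (where $m=0$) the weight is even constant. This settles $\Phi_{0,h}^1,\Phi_{0,h}^2,\Phi_{m,\ell}^1,\Phi_{m,\ell}^2,\Phi_{m}^2,\Phi_{j_0}^1$ together with all functionals in the second block.

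It remains to treat $\Phi_{m'}^1$, whose only nonlinear term is $\left(\Phi_{j_0}^1(f)\right)^{m'/j_0}$, a real power (with exponent $m'/j_0>1$) of the linear functional $\Phi_{j_0}^1$. Near a maximizer one has $\Phi_{j_0}^1(f)=\lambda^{j_0}>0$, and by continuity $\Phi_{j_0}^1$ stays positive on a neighborhood, on which $t\mapsto t^{m'/j_0}$ is smooth. The chain rule then shows $\Phi_{m'}^1$ is differentiable with
$$d_f\Phi_{m'}^1(h)={\rm Re}\int_{b\Delta}\langle h,\omega_{m'}\rangle\,d\theta-\frac{m'}{j_0}\left(\Phi_{j_0}^1(f)\right)^{\frac{m'}{j_0}-1}d_f\Phi_{j_0}^1(h),$$
where $\omega_{m'}=\frac{m'!}{2\pi\|\xi_{m'}\|^2}\zeta^{-m'}\overline\xi_{m'}$. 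Because the scalar $\frac{m'}{j_0}\left(\Phi_{j_0}^1(f)\right)^{m'/j_0-1}$ is real, it can be absorbed into the weight, so $d_f\Phi_{m'}^1(h)={\rm Re}\int_{b\Delta}\langle h,\omega\rangle\,d\theta$ with $\omega$ a real-linear combination of $\zeta^{-m'}\overline\xi_{m'}$ and $\zeta^{-j_0}\overline\xi_{j_0}$, again holomorphic on $\C\setminus\{0\}$. Hence (A) and (B) hold for $\Phi_{m'}^1$ as well.

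The only point requiring any care is precisely this fractional power: it is differentiable only where its base $\Phi_{j_0}^1$ is positive. This is harmless here, since positivity holds on a neighborhood of the maximizer, which is all that Poletsky's theorem uses; every remaining verification reduces to reading off the weight $\omega$ from the integrand.
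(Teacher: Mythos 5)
Your proof is correct and follows essentially the same route as the paper's: all functionals except $\Phi^1_{m'}$ are real-linear, so their differentials are themselves and the weights $\omega$ are read off directly as meromorphic maps with a pole only at the origin, while $\Phi^1_{m'}$ is handled by the chain rule, yielding exactly the coefficient $C_{m'}(f^0)=\frac{m'}{j_0}\bigl(\Phi^1_{j_0}(f^0)\bigr)^{\frac{m'}{j_0}-1}$ that appears in the paper. Your additional remark that the fractional power $t\mapsto t^{m'/j_0}$ is only differentiable where $\Phi^1_{j_0}$ is positive --- which holds near the maximizer and suffices for Poletsky's theorem --- is a small point of care that the paper passes over silently, but it does not change the argument.
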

\begin{proof}[Proof of Lemma \ref{satisfies}] 
Note  that the functionals under considerations are all linear with the exception of $\Phi^1_{m'}$ for any $j_0+1\leq m'\leq k$ such that $\xi_{m'}\neq 0$. A straightforward computation yields 
\[d_{f^0}\Phi^1_{m'}(f)=\frac{1}{2\pi}{\rm Re}\int_{b\Delta} \left\langle f(e^{i\theta}),  \left(\frac{m'!}{\|\xi_{m'}\|^2 \zeta^{m'}}\overline \xi_{m'} - C_{m'}(f^0)\frac{j_0! }{\|\xi_{j_0}\|^2\zeta^{j_0}}\overline \xi_{j_0}  \right) \right\rangle d\theta \]
where 
\begin{equation*}
C_{m'}(f^0) = \frac{m'}{j_0}\left(\Phi^1_{j_0}(f^0) \right)^{\frac{m'}{j_0}-1}.
\end{equation*}
for any $j_0+1\leq m'\leq k$ such that $\xi_{m'}\neq 0$. 
Now, let $e_1,\ldots,e_n$ be the standard basis of $\mathbb C^n$ and let $f^0 \in{\rm H}^{\infty}(\Delta,\C^n)$. Define for 
 $1\leq h \leq n$, $ 1\leq \ell \leq n-1$, $j_0\leq m \leq k$  and $j_0+1\leq m' \leq k$ 
  such that $\xi_m\neq 0$, $\xi_{m'}\neq 0$, holomorphic maps $\mathbb C\setminus\{0\}\to \mathbb C^n$ as follows:
 \[
\left\{
\begin{array}{lll}
\displaystyle \omega_{0,h}^1=\frac{1}{2\pi}e_h,\ \ \omega_{0,h}^2=-\frac{i}{2\pi}e_h,\\
\\
\displaystyle \omega_{m,\ell}^1= \frac{1}{2\pi\zeta^m} \eta^\ell_m, \ \ \omega_{m,\ell}^2= -\frac{i}{2\pi\zeta^m} \eta^\ell_m\\
\\
\displaystyle \omega^1_{j_0}=\frac{j_0!}{2\pi\|\xi_{j_0}\|^2\zeta^{j_0}}\overline \xi_{j_0}\\
\\
\displaystyle  \omega^1_{m'} = \frac{m'!}{2\pi\|\xi_{m'}\|^2\zeta^{m'}}\overline \xi_{m'} - C_{m'}(f^0)\omega^1_{j_0}\\
\\
\displaystyle \omega^2_m = -\frac{i}{2\pi\zeta^m}\overline \xi_m
\end{array}
\right.
\]
For all $j_0+1\leq m \leq k$ such that $\xi_m=0$ and all $j_0+1\leq m'\leq k$ such that  $\xi_{m'}=0$ we define instead
\[\omega_{m,\ell}^1=\frac{1}{2\pi\zeta^m}e_{\ell},\ \ \omega_{m,\ell}^2=-\frac{i}{2\pi\zeta^m}e_\ell, \ \ \omega_m^2=-\frac{i}{2\pi\zeta^m}e_n, \ \ \omega_{m'}^1=\frac{1}{2\pi\zeta^{m'}}e_n.\]
 With these definitions, we have 
 $$\Phi^1_{0,h}(f)={\rm Re}\int_{b\Delta} \langle f,\omega_{0,h}^1\rangle
  d\theta, \ \ \ \ \ \ \ \Phi^2_{0,h}(f)={\rm Re}\int_{b\Delta} \langle f,\omega_{0,h}^2\rangle d\theta,  $$
  $$\Phi^1_{m,\ell}(f)={\rm Re}\int_{b\Delta} \langle f,\omega_{m,\ell}^1\rangle
  d\theta, \ \ \ \ \ \ \ \Phi^2_{m,\ell}(f)={\rm Re}\int_{b\Delta} \langle f,\omega_{m,\ell}^2\rangle d\theta,  $$
  $$\Phi^1_{j_0}(f)={\rm Re}\int_{b\Delta} \langle f,\omega_{j_0}^1\rangle
  d\theta, \ \ \ \ \ \ \ d_{f^0}\Phi^1_{m'}(f)={\rm Re}\int_{b\Delta} \langle f,\omega_{m'}^1\rangle d\theta,  
  \ \ \ \ \ \ \ \Phi^2_{m}(f)={\rm Re}\int_{b\Delta} \langle f,\omega_{m}^2\rangle d\theta,  
  $$
  from which the conclusion follows immediately. 
\end{proof}
In order to apply Theorem 2, it remains to prove the following lemma
\begin{lem} For any disc $f^0\in {\rm H}^{\infty}(\Delta,\C^n)$, the linear functionals $\Phi^1_{0,h}$, $\Phi^2_{0,h}$, $\Phi^1_{m,\ell}$, $\Phi^2_{m,\ell}$, $\Phi^1_{j_0}$, $d_{f^0} \Phi^1_{m'}$ and $\Phi^2_m$  are linearly independent on $b\Delta$.
\end{lem}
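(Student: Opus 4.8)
The plan is to observe that on $b\Delta$ every weight $\omega$ is a vector-valued Laurent monomial in $1/\zeta$. Indeed, since $1/\zeta^m=e^{-im\theta}$ on $b\Delta$, the weights $\omega^1_{0,h}$ and $\omega^2_{0,h}$ sit at Fourier frequency $0$, while each of the remaining weights is supported at a single negative frequency $-m$ with $j_0\le m\le k$; the only exception is $\omega^1_{m'}$ (in the case $\xi_{m'}\neq 0$), which is supported at the two frequencies $-m'$ and $-j_0$ because of the Lagrange cross term $-C_{m'}(f^0)\omega^1_{j_0}$. Unwinding the definition of linear independence over $b\Delta$, I must show that if $\sum\lambda_j\omega_j=g$ on $b\Delta$ for real scalars $\lambda_j$ and some $g\in {\rm H}^{\infty}(\Delta,\C^n)$ with $g(0)=0$, then all $\lambda_j=0$ and $g\equiv 0$. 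The first step is to match Fourier coefficients: the boundary values of $g$ carry only strictly positive frequencies $e^{in\theta}$, $n\ge 1$, whereas the left-hand side carries only frequencies $\le 0$; hence every positive mode of $g$ vanishes, so $g\equiv 0$, and at the same time each Fourier mode of $\sum\lambda_j\omega_j$ must vanish on its own.

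It then remains to show that the vanishing of each mode forces the relevant coefficients to be zero, which I would carry out frequency by frequency, from the largest pole order down to $-j_0$ and then to $0$. The algebraic heart is the following: for each $m$ with $\xi_m\neq 0$ the vectors $\eta^1_m,\ldots,\eta^{n-1}_m,\ov\xi_m$ form a basis of $\C^n$, since the $\eta^\ell_m$ span the hyperplane $\{w:\langle\xi_m,w\rangle=0\}$ whereas $\langle\xi_m,\ov\xi_m\rangle=\|\xi_m\|^2\neq 0$ places $\ov\xi_m$ off that hyperplane. For $m'>j_0$ with $\xi_{m'}\neq 0$, the frequency-$(-m')$ equation reads $\sum_\ell(\lambda^1_{m',\ell}-i\lambda^2_{m',\ell})\eta^\ell_{m'}+\bigl(-i\lambda^2_{m'}+\frac{m'!}{\|\xi_{m'}\|^2}\lambda_{m'}\bigr)\ov\xi_{m'}=0$; the basis property forces each vector coefficient to vanish, and then, since the $\lambda$'s are real, separating real and imaginary parts gives $\lambda^1_{m',\ell}=\lambda^2_{m',\ell}=\lambda^2_{m'}=\lambda_{m'}=0$.

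With all $\lambda_{m'}=0$ in hand, the cross term $-C_{m'}(f^0)\omega^1_{j_0}$ no longer contributes to the frequency-$(-j_0)$ equation, which then takes the same shape with $m'$ replaced by $j_0$ and yields $\lambda^1_{j_0,\ell}=\lambda^2_{j_0,\ell}=\lambda^2_{j_0}=\lambda_{j_0}=0$ by the identical basis-plus-reality argument. For each $m$ with $\xi_m=0$ the second batch of weights is built from the standard basis, so the frequency-$(-m)$ equation $\sum_{\ell\le n-1}(\lambda^1_{m,\ell}-i\lambda^2_{m,\ell})e_\ell+(-i\lambda^2_m+\lambda_m)e_n=0$ at once kills those coefficients, and finally the frequency-$0$ equation $\sum_h(\lambda^1_{0,h}-i\lambda^2_{0,h})e_h=0$ disposes of the last ones. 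The main obstacle --- really the only delicate point --- is the bookkeeping at the two coupled frequencies $-m'$ and $-j_0$: I must order the elimination so that the largest $m'$ is treated first and $j_0$ last, which is exactly what makes the term $C_{m'}(f^0)$ disappear, and at every stage I must combine the vector linear independence with the reality of the coefficients to peel off real and imaginary parts.
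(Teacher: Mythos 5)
Your proof is correct and follows essentially the same route as the paper: kill $g$ and the nonnegative-frequency coefficients by matching Fourier modes (the left side has only powers $\zeta^{-m}$, $0\le m\le k$, while $g=\sum_{j\ge 1}g_j\zeta^j$), then use at each frequency the fact that $\ov\xi_m,\eta_m^1,\ldots,\eta_m^{n-1}$ form a basis of $\C^n$ (respectively the standard basis when $\xi_m=0$) together with the reality of the $\lambda$'s, treating the frequencies $-m'$ with $m'>j_0$ before $-j_0$ so that the cross terms $-C_{m'}(f^0)\omega^1_{j_0}$ drop out. This is exactly the paper's argument, with the bookkeeping made slightly more explicit.
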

\begin{proof} Assume that $\xi_m\neq 0$ for all $j_0+1\leq m\leq k$ (the case when $\xi_m=0$ for some $m$ is simpler). Using the exact expression of the $\omega$'s obtained in the proof of Lemma \ref{satisfies}, we need to consider the vector valued equation on $b\Delta$
\[\sum_{h=1}^n (\lambda_{0,h}^1-i\lambda_{0,h}^2)e_h  +\sum_{m=1}^{j_0-1}\frac{1}{\zeta^m}\sum_{\ell=1}^{n-1}(\lambda^1_{m,\ell}-i\lambda^2_{m,\ell})\eta_{m}^\ell +\] 
\[\frac{1}{\zeta^{j_0}} \left(\left(\frac{j_0!\lambda^1_{j_0}}{\|\xi_{j_0}\|}-i\lambda^2_{j_0}\right)\overline\xi_{j_0} + \sum_{\ell=1}^{n-1}(\lambda^1_{j_0,\ell}-i\lambda^2_{j_0,\ell})\eta_{j_0}^\ell - \sum_{m'=j_0+1}^k \frac{j_0!C_{m'}(f^0)\lambda^1_{m'}}{\|\xi_{j_0}\|^2}\overline \xi_{j_0} \right)+\]
\[ +\sum_{m'=j_0+1}^k\frac{1}{\zeta^{m'}}\left(\left (\frac{m'!\lambda^1_{m'}}{\|\xi_{m'}\|^2} - i\lambda^2_{m'}\right)\overline \xi_{m'} + \sum_{\ell=1}^{n-1}(\lambda^1_{m',\ell}-i\lambda^2_{m',\ell})\eta_{m'}^\ell\right) = g\]
%
%
for suitable $\lambda^1_{0,h},\lambda^2_{0,h},\lambda^1_{m,\ell},\lambda^2_{m,\ell},\lambda^2_m,\lambda^1_{j_0},\lambda^1_{m'}\in \mathbb R$ and $g\in {\rm H}^{\infty}(\Delta,\C^n)$ such that $g(0)=0$. Since the Fourier expansion of $g$ is of the form $\sum_{j\geq 1} g_j \zeta^j$, we have immediately that $g\equiv 0$, $\lambda^1_{0,h}=\lambda^2_{0,h}=0$ for $1\leq h \leq n$ and $\lambda^1_{m,\ell}=\lambda^2_{m,\ell}=0$ for 
$1\leq m \leq j_0-1$. Now observe that for all $j_0\leq m\leq k$ the vectors $\overline \xi_m, \eta_m^1,\ldots,\eta_m^{n-1}$ are a basis of $\mathbb C^n$ over $\mathbb C$. Indeed, if $\overline \xi_m = \sum_{\ell=1}^{n-1}\mu_\ell \eta_m^\ell$ we would have 
$$\|\xi_m\|^2 = \langle \xi_m, \overline \xi_m \rangle= \sum_{\ell=1}^{n-1}\mu_\ell\langle \xi_m, \eta_m^\ell \rangle=0$$ which is a contradiction. It follows that $\lambda^1_{m'} = \lambda^2_{m'} = \lambda^1_{m',\ell} = \lambda^2_{m',\ell} = 0$ for all 
$j_0+1\leq m' \leq k$, $1\leq \ell \leq n-1$, and finally that $\lambda^1_{j_0} = \lambda^2_{j_0} = \lambda^1_{j_0,\ell} = 
\lambda^2_{j_0,\ell} = 0$.
\end{proof}
We are now in a position to apply Theorem 2. We 
obtain that a disc $f$ which is extremal for the problem in Lemma \ref{varprob}, and thus for the Kobayashi $k$-metric, must satisfy the Euler-Lagrange equations
\[
\rho(f) = 0\ {\rm \ a.e. \ on } \ b\Delta,\]
\[\sum_{h=1}^n (\lambda_{0,h}^1\omega^1_{0,h}+\lambda_{0,h}^2 \omega^2_{0,h}) +  (\lambda^1_{j_0} \omega^1_{j_0}+\lambda^2_{j_0} \omega^2_{j_0}) + \sum_{\ell=1}^{n-1}(\lambda^1_{1,\ell}\omega^1_{1,\ell}+\lambda^2_{1,\ell}\omega^2_{1,\ell}) +\] 
\[ + \sum_{m'=2}^k \left (   \lambda^1_{m'} \omega^1_{m'} + \lambda^2_{m'}\omega^2_{m'} +\sum_{\ell=1}^{n-1} \lambda^1_{m',\ell} \omega^1_{m',\ell} +\lambda^2_{m',\ell} \omega^2_{m',\ell} \right ) + g = c \partial \rho(f)\ \ {\rm a.e. \ on } \ b\Delta\]
for a suitable $L^{\infty}$ function $c:b\Delta\to \mathbb R^+$ and a certain $g\in {\rm H}^{\infty}(\Delta,\C^n)$ such that $g(0)=0$. Since the maps $\omega$ are all meromorphic with a pole (of order at most $k$) only at $0$, it follows that the map $\zeta^k c \partial \rho(f)$ extends holomorphically to $\Delta$, hence $f$ is $k$-stationary in the $L^\infty$-sense. This concludes the proof of Theorem \ref{theoextr}.
\qed \section{$k$-stationary discs are locally extremal for the Kobayashi $k$-pseudometric}

\begin{theo}\label{theokstat}
  Let $\Omega \subset \C^n$ be a smooth strictly convex domain. Then $k$-stationary discs for $\Omega$ are locally extremal for the Kobayashi $k$-metric. 
\end{theo}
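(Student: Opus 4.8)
The plan is to adapt Lempert's comparison argument, exploiting the holomorphic extension furnished by $k$-stationarity together with the supporting hyperplane property of the strictly convex domain $\Om=\{\rho<0\}$. Let $f$ be a $k$-stationary disc, set $p=f(0)$, and let $\xi=(\xi_1,\dots,\xi_k)$ be determined by $J^k_p(f)=\lambda\cdot\xi$ with $\lambda>0$; write $\tilde f(\zeta)=\zeta^k c(\zeta)\partial\rho(f(\zeta))$ for the $\C^n$-valued holomorphic extension provided by the definition. First I would record the two analytic facts attached to $f$. On one hand, convexity of $\rho$ gives the supporting inequality ${\rm Re}\,\langle w-z,\partial\rho(z)\rangle\le 0$ for $z\in b\Om$, $w\in\ov\Om$; applied with $z=f(\zeta)$, $w=g(\zeta)$ for any competing disc $g\in{\rm Hol}(\Delta,\Om)$ with $g(0)=p$ and $J^k_p(g)=\mu\cdot\xi$, and using that $c(\zeta)\partial\rho(f(\zeta))=\ov\zeta^{\,k}\tilde f(\zeta)$ on $b\Delta$, this reads ${\rm Re}\,[\ov\zeta^{\,k}\langle g(\zeta)-f(\zeta),\tilde f(\zeta)\rangle]\le 0$ a.e. on $b\Delta$. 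On the other hand, differentiating $\rho(f(e^{i\theta}))\equiv 0$ shows that $r(\theta):=e^{i\theta}\langle f'(e^{i\theta}),\partial\rho(f(e^{i\theta}))\rangle$ is real and equals half the outer normal derivative of the subharmonic function $\Phi=\rho\circ f$; since $\Phi\le 0$ with $\Phi=0$ on $b\Delta$ and $\Phi<0$ inside (as $f$ is nonconstant and $b\Om$ carries no analytic disc), monotonicity gives $r\ge 0$, and strict convexity (Hopf's lemma) gives $r>0$.

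The core of the argument is then a coefficient computation. Writing $H(\zeta)=\langle g(\zeta)-f(\zeta),\tilde f(\zeta)\rangle$, which is holomorphic on $\Delta$, integration of the boundary inequality over $b\Delta$ extracts its $k$-th Taylor coefficient $a_k$ and yields ${\rm Re}\,a_k\le 0$. Expanding $g-f$ and $\tilde f$ in power series, and using $g^{(j)}(0)-f^{(j)}(0)=(\mu^j-\lambda^j)\xi_j$ for $1\le j\le k$ together with the fact that the $(k-j)$-th Taylor coefficient of $\tilde f$ is the $(-j)$-th Fourier coefficient $e_{-j}$ of $c\,\partial\rho(f)$, I would obtain
\[
a_k(\mu)=\sum_{j=1}^k\frac{1}{j!}(\mu^j-\lambda^j)\,\langle\xi_j,e_{-j}\rangle ,
\]
so that $a_k$ depends on $g$ only through $\mu$ and vanishes at $\mu=\lambda$. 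The decisive step is to differentiate in $\mu$ at $\mu=\lambda$: substituting $\xi_j=\lambda^{-j}f^{(j)}(0)$ turns the derivative into $\lambda^{-1}\sum_{j\ge 1}\langle j f_j,e_{-j}\rangle$, which is exactly the $\zeta^{-1}$-Fourier coefficient of $\langle f',c\,\partial\rho(f)\rangle=\ov\zeta^{\,k}\langle f',\tilde f\rangle$ on $b\Delta$, namely $\tfrac{1}{2\pi}\int_0^{2\pi}c(\theta)r(\theta)\,d\theta$. By the previous paragraph this quantity is real and strictly positive, whence $\left.\tfrac{d}{d\mu}{\rm Re}\,a_k(\mu)\right|_{\mu=\lambda}>0$.

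With this in hand local extremality follows: a competitor $g$ close to $f$ has $\mu$ close to $\lambda$, so ${\rm Re}\,a_k(\mu)=({\rm positive})\cdot(\mu-\lambda)+o(\mu-\lambda)$; since convexity forces ${\rm Re}\,a_k(\mu)\le 0$, we must have $\mu\le\lambda$, i.e. $f$ minimizes $1/\mu$ among nearby competing discs. I would stress that the statement is genuinely local precisely because for $k\ge 2$ the function $a_k(\mu)$ is a degree-$k$ polynomial in $\mu$, so the sign constraint need not propagate to $\mu$ far from $\lambda$, whereas for $k=1$ it is linear and one recovers Lempert's global conclusion. The main obstacle I anticipate is the positivity $\tfrac{1}{2\pi}\int c\,r\,d\theta>0$: it rests on the pointwise sign $r>0$, hence on Hopf's lemma for $\rho\circ f$, which in turn requires enough boundary regularity of $f$ (and nondegeneracy of $c$), so some care — invoking smoothness of stationary discs in strictly convex domains, or an approximation/barrier argument in the $L^\infty$ setting — will be needed to make the normal-derivative estimate rigorous.
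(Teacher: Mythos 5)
Your proposal is correct and follows essentially the same route as the paper: the supporting-hyperplane inequality paired with the holomorphic extension $\zeta^k c\,\partial\rho(f)$, extraction of the single relevant Fourier/Taylor coefficient, and positivity of the linearized term via the Hopf lemma for $\rho\circ f$, yielding the local conclusion for competitors with jet parameter near that of $f$. The only (cosmetic) difference is that you obtain the key positivity $\sum_j j\langle f_j,e_{-j}\rangle=\frac{1}{2\pi}\int_{b\Delta} c\,r\,d\theta>0$ by direct identification with the radial derivative of $\rho\circ f$, whereas the paper reaches the same quantity by testing the inequality against the auxiliary family $g_\lambda(\zeta)=f(\lambda\zeta)$ and letting $\lambda\to 1^-$; both hinge on the same Hopf-lemma estimate, and the boundary-regularity caveat you raise is likewise left implicit in the paper.
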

\begin{proof}
We follow here the approach developped by L. Lempert (see Proposition $1$ in \cite{lem}). Let $f:\Delta\to \mathbb C^n$ be a $k$-stationary disc. 
Let
$g: \Delta\to \mathbb C^n$ be a holomorphic m ap, close enough to $f$, such that $g\neq f$,  $g(0)=f(0)=p$ and 
  $J^k_p(g) =\lambda\cdot J^k_p(f)$ for some positive $\lambda$. We wish to prove that $\lambda\leq 1$.  Due to the convexity of $\Omega$ we have a.e. on $b\Delta$
$${\rm Re}  \langle f(\zeta)-g(\zeta),\partial \rho(f(\zeta))\rangle >0$$
and thus
$${\rm Re} \langle f(\zeta)-g(\zeta),c(\zeta)\partial \rho(f(\zeta))\rangle={\rm Re} 
\langle \zeta ^{-k}(f(\zeta)-g(\zeta)),\tilde{f}(\zeta) \rangle >0.$$      
This implies that 
$$ {\rm Re} \sum_{j=1}^k \left \langle  \frac{1-\lambda^j}{j!}f^{(j)}(0) \zeta^{j-k},\tilde{f}(\zeta) \right \rangle
+ {\rm Re} \sum_{j=k+1}^{\infty}  \left \langle \frac{1}{j!}(f^{(j)}(0)-g^{(j)}(0))\zeta^{j-k}   ,\tilde{f}(\zeta) \right \rangle >0$$
and so       
$$ {\rm Re} \sum_{j=1}^k \frac{1-\lambda^j}{j!} \left \langle  f^{(j)}(0) e^{i(j-k)\theta} ,\tilde{f}\left(e^{i\theta}\right)\right \rangle
+ {\rm Re} \sum_{j=k+1}^{\infty}  \frac{1}{j!}\left \langle (f^{(j)}(0)-g^{(j)}(0))e^{i(j-k)\theta}   ,\tilde{f}\left(e^{i\theta}\right) \right \rangle >0.$$
Since ${\rm Re} \sum_{j=k+1}^{\infty}  \left \langle \frac{1}{j!}(f^{(j)}(0)-g^{(j)}(0))\zeta^{j-k}   ,\tilde{f}(\zeta) \right \rangle$ 
extends as a harmonic function $u$ on $\Delta$ and since $u(0)=0$, we have  
$$  \sum_{j=1}^k \frac{1-\lambda^j}{j!}  {\rm Re} \int_{0}^{2\pi} \left \langle  f^{(j)}(0),\tilde{f}\left(e^{i\theta}\right)\right \rangle e^{i(j-k)\theta} 
{\rm d}\theta >0$$
which can be written as
$$(1-\lambda) \left[\sum_{j=1}^k \frac{1+\lambda+\cdots \lambda^{j-1}}{j!}  {\rm Re} \int_{0}^{2\pi} \left \langle  f^{(j)}(0),\tilde{f}\left(e^{i\theta}\right)\right \rangle e^{i(j-k)\theta} 
{\rm d}\theta \right]>0.$$
We have
$$(1-\lambda) \left[\sum_{j=1}^k \frac{1+\lambda+\cdots \lambda^{j-1}}{j!}  {\rm Re}\left \langle  f^{(j)}(0), \int_{0}^{2\pi} \tilde{f}\left(e^{i\theta}\right) e^{i(j-k)\theta} {\rm d}\theta \right\rangle 
 \right]>0,$$
 and therefore
 \begin{equation}\label{eqsign}
 (1-\lambda) {\rm Re} \left[\sum_{j=1}^k \frac{1+\lambda+\cdots \lambda^{j-1}}{j!(j-k)!} \left \langle  f^{(j)}(0), \tilde{f}^{(k-j)}(0) \right\rangle 
 \right]>0.
 \end{equation}

\begin{lem}\label{fromHopf}
For $\lambda<1$, let $g_\lambda(\zeta):= f(\lambda\zeta)$. Then we have
\[{\rm Re}  \langle f(\zeta)-g_\lambda(\zeta),\partial \rho(f(\zeta))\rangle = (1-\lambda)k_\lambda(\zeta)\] 
with $k_\lambda(\zeta)\to k_1(\zeta)\neq 0$ for all $\zeta\in b\Delta$.
\end{lem}
\begin{proof}
The function $v=\rho\circ f$ is (strictly) subharmonic and negative on $\Delta$, while it vanishes on $b\Delta$. By the Hopf lemma the radial derivative $\displaystyle \frac{\partial v}{\partial r}(\zeta)$ is non-vanishing for $\zeta\in b\Delta$. Using the chain rule we can write
\[\frac{\partial v}{\partial r}(\zeta) = \partial \rho(f(\zeta))\cdot \frac{\partial f}{\partial r}(\zeta) + \overline \partial \rho(f(\zeta))\cdot \frac{\partial \overline f}{\partial r}(\zeta) =  2{\rm Re}  \left \langle \frac{\partial f}{\partial r}(\zeta),\partial \rho(f(\zeta))\right \rangle. \]
On the other hand
\[\lim_{\lambda\to 1^-}\frac{f(\zeta)-g_\lambda(\zeta)}{1-\lambda}=\lim_{\lambda\to 1^-}\frac{f(\zeta)-f(\lambda\zeta)}{1-\lambda}=|\zeta|\frac{\partial f}{\partial r}(\zeta),\]
hence the statement of the lemma follows by setting $\displaystyle k_1(\zeta)=\frac{1}{2}\frac{\partial v}{\partial r}(\zeta)$ for $\zeta\in b\Delta$.
\end{proof}

Now, we turn  back to the proof of Theorem \ref{theokstat}. Putting $g_\lambda(\zeta)=f(\lambda\zeta)$, with $\lambda<1$, in place of $g(\zeta)$ in the previous computation, and using Lemma \ref{fromHopf}, we obtain
\[0\neq\lim_{\lambda\to 1^-} \underbrace{{\rm Re} \left[\sum_{j=1}^k \frac{1+\lambda+\cdots \lambda^{j-1}}{j!(j-k)!} \left \langle  f^{(j)}(0),  \tilde{f}^{(k-j)}(0) \right\rangle 
 \right]}_{I_\lambda}={\rm Re} \left[\sum_{j=1}^k \frac{j}{j!(j-k)!} \left \langle  f^{(j)}(0),  \tilde{f}^{(k-j)}(0) \right\rangle 
 \right].\]
Due to (\ref{eqsign}) and $\lambda<1$, the terms $I_\lambda$ are positive and therefore the expression above is positive. Let then $g:\Delta\to \mathbb C^n$ be any holomorphic map as above which is close enough to $f$; in particular $J^k_p(g) =\lambda\cdot J^k_p(f)$
 with $\lambda$ close enough to $1$, so that we 
have  
$${\rm Re} \left[\sum_{j=1}^k \frac{1+\lambda+\cdots \lambda^{j-1}}{j!(j-k)!} \left \langle  f^{(j)}(0),  \tilde{f}^{(k-j)}(0) \right\rangle 
 \right]>0,$$
which implies together with  (\ref{eqsign})  that $\lambda<1$.
\end{proof}

\vskip 1cm
{\small
\noindent Florian Bertrand\\
Department of Mathematics, Fellow at the  Center for Advanced Mathematical Sciences (CAMS)\\\
American University of Beirut, Beirut, Lebanon\\{\sl E-mail address}: fb31@aub.edu.lb\\

\noindent Giuseppe Della Sala \\
Department of Mathematics, Fellow at the  Center for Advanced Mathematical Sciences (CAMS)\\\
American University of Beirut, Beirut, Lebanon\\{\sl E-mail address}: 	gd16@aub.edu.lb\\

\noindent Jae-Cheon Joo \\
Department of Mathematics and Statistics\\
King Fahd University of Petroleum and Minerals at Dhahran, Kingdom of Saudi Arabia\\
{\sl E-mail address}: 	jcjoo@kfupm.edu.sa\\

}

\end{document}